\newtheorem{thm}{Theorem}[section]
\newtheorem{prop}[thm]{Proposition}
\newtheorem{cor}[thm]{Corollary}
\newtheorem{lem}[thm]{Lemma}
\theoremstyle{definition}
\newtheorem{defi}[thm]{Definition}
\newtheorem{defis}[thm]{Definitions}
\newtheorem{conj}[thm]{Conjecture}
\newtheorem{rem}[thm]{Remark}
\theoremstyle{plain}
\renewcommand{\phi}{\varphi}
\newcommand{\alt}{\mathop{\mathrm{Alt}}}
\newcommand{\sym}{\mathop{\mathrm{Sym}}}
\newcommand{\id}{\mathop{\mathrm{id}}}
\newcommand\iso{\xrightarrow{
   \,\smash{\raisebox{-0.3ex}{\ensuremath{\scriptstyle\sim}}}\,}}
\newcommand{\an}{\mathop{\mathrm{an}}}
\author{A.-H. Nokhodkar}
\date{}
\title
{Separable extensions of orthogonal involutions in characteristic two}
\begin{document}
\maketitle
\vspace{-.9cm}
\begin{center}
{\footnotesize
{\em
\baselineskip.6cm
 Department of Pure Mathematics, Faculty of Science, University of Kashan,

 P.~O. Box 87317-51167, Kashan, Iran.

    a.nokhodkar@kashanu.ac.ir}
}
\end{center}
\vspace{.01cm}
\begin{abstract}
It is shown that an anisotropic orthogonal involution in characteristic two is totally decomposable if it is totally decomposable over a separable extension  of the ground field.
In particular, this settles a characteristic two analogue of a conjecture formulated by Bayer-Fluckiger et al.\\
\\
\noindent
\emph{Mathematics Subject Classification:} 16W10, 16K20, 11E04. \\
\end{abstract}

\section{Introduction}
The theory of involutions on central simple algebras is a generalization of the theory of symmetric bilinear forms.
Given the fundamental role which Pfister forms play in the modern theory of bilinear forms, it is natural to ask whether there is a type of algebras with involution naturally generalising these forms.
Since the adjoint involution of a Pfister form is totally decomposable, the class of totally decomposable involutions is a good candidate for this generalisation.
Considering this fact and basic properties of Pfister forms, the following conjecture was formulated in \cite{bayer}:
\begin{conj}\label{conj}
Let $F$ be a field of characteristic not two and let $(A,\sigma)$ be an algebra with orthogonal involution of degree $2^n$ over $F$.
Then the following statements are equivalent:
\begin{itemize}
  \item [(1)] $(A,\sigma)$ is totally decomposable.
  \item [(2)] For every field extension $K/F$, $(A,\sigma)_K$ is either anisotropic or hyperbolic.
  \item [(3)] For every splitting field $K$ of $A$, $(A,\sigma)_K$ is adjoint to a Pfister form.
\end{itemize}
\end{conj}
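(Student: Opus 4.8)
The plan is to prove the cyclic chain of implications $(1)\Rightarrow(2)\Rightarrow(3)\Rightarrow(1)$. The first two arrows rest on standard facts from the theory of quadratic forms and involutions, and I would expect the genuine difficulty to be concentrated in the recognition step $(3)\Rightarrow(1)$, where one must reconstruct an actual quaternionic tensor decomposition over $F$ out of data that is only visible after scalar extension to a splitting field.

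For $(1)\Rightarrow(2)$, write $(A,\sigma)\cong\bigotimes_{i=1}^n (Q_i,\sigma_i)$ with each $\sigma_i$ an orthogonal involution on a quaternion algebra $Q_i$. Total decomposability is stable under arbitrary scalar extension, so it suffices to know that a totally decomposable orthogonal involution over any field is itself anisotropic or hyperbolic; this is the involution-theoretic analogue of the fact that a Pfister form is anisotropic or hyperbolic, and I would invoke it as the key input. For $(2)\Rightarrow(3)$, fix a splitting field $K$ of $A$, so that $(A,\sigma)_K$ is adjoint to a symmetric bilinear form $q$ of dimension $2^n$. Applying hypothesis (2) to the extensions of $K$ shows that $q$ becomes anisotropic or hyperbolic over every extension of $K$; in particular $q$ is hyperbolic over its own function field. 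Since an anisotropic form of dimension $2^n$ that is hyperbolic over its function field is similar to an $n$-fold Pfister form $\lla a_1,\dots,a_n\rra$, we conclude that $(A,\sigma)_K$ is adjoint to a Pfister form, which is precisely (3).

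The hard direction is $(3)\Rightarrow(1)$, which I would attack by induction on $n$, the cases $n\le 1$ being trivial since every orthogonal involution on a quaternion algebra is already a single factor. For the inductive step the strategy is to split off one quaternion factor: locate a $\sigma$-invariant quaternion subalgebra $Q\subseteq A$ on which $\sigma$ restricts to an orthogonal involution, so that $(A,\sigma)\cong(Q,\sigma|_Q)\otimes(C,\sigma|_C)$ with $C$ the centraliser of $Q$, and then verify that $(C,\sigma|_C)$ again satisfies (3), so the induction hypothesis applies. The Pfister structure of the adjoint form over a splitting field supplies the raw material: its slots $a_1,\dots,a_n$ should correspond to the quaternion factors, and one expects the first slot to descend to a quaternion subalgebra of $A$ defined over $F$.

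The main obstacle is exactly this descent. A splitting field of $A$ is in general a transcendental extension (for instance the function field of the Severi--Brauer variety), and the Pfister decomposition of the adjoint form lives only there; converting it into a genuine tensor factorisation of $(A,\sigma)$ over $F$ requires a generic-splitting argument together with a criterion recognising when a quaternion subalgebra is defined over the ground field and compatible with $\sigma$. This is where I expect the essential work to lie, and where passing to and descending from well-chosen splitting fields is the natural mechanism for turning the form-theoretic Pfister information into algebra-with-involution structure.
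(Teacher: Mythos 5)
Your proposal does not prove the statement, and indeed it cannot be faulted for that alone: the statement is labelled a \emph{conjecture} in the paper, and the paper itself records that the implications $(1)\Rightarrow(2)$, $(1)\Rightarrow(3)$ and $(2)\Leftrightarrow(3)$ are known, while $(2)\Rightarrow(1)$ and $(3)\Rightarrow(1)$ are open in general (known only for low index or degree and in special cases). Your treatment of $(1)\Rightarrow(2)$ and $(2)\Rightarrow(3)$ is essentially the known route (Becher's theorem for the first; the function-field/subform characterization of Pfister forms for the second, as in Black and Qu\'eguiner-Mathieu), and those sketches are fine. But your $(3)\Rightarrow(1)$ is not a proof: it is a restatement of the difficulty. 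You propose to find a $\sigma$-stable quaternion subalgebra $Q\subseteq A$ with $\sigma|_Q$ orthogonal, to identify $(A,\sigma)\cong(Q,\sigma|_Q)\otimes(C,\sigma|_C)$, and to check that the centralizer again satisfies (3) --- yet you supply no mechanism for any of these steps. The ``descent'' of a Pfister slot from a generic splitting field (such as the function field of the Severi--Brauer variety, which is transcendental and over which the Brauer class dies) to a quaternion subalgebra of $A$ defined over $F$ and compatible with $\sigma$ is precisely what nobody knows how to do; it is the entire open content of the conjecture. Acknowledging that ``this is where the essential work lies'' does not discharge it, so the proposal has a genuine, fatal gap.

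It is also worth noting what the paper actually proves, since it is not this statement. The paper's main result (\cref{mainn}) is a characteristic-two analogue restricted to \emph{anisotropic} orthogonal involutions, with hyperbolicity replaced by metabolicity and with a fourth equivalent condition (total decomposability over some separable extension). Its mechanism is special to characteristic two: the totally singular alternator form $q_\sigma$ on the subalgebra $S(A,\sigma)$, the fact that $S(A_K,\sigma_K)=S(A,\sigma)\otimes K$ for separable $K/F$ (\cref{sep}), and the criterion that $(A,\sigma)$ is totally decomposable if and only if $S(A,\sigma)\subseteq\sym(A,\sigma)$ has dimension $2^n$ (\cref{td}). None of this machinery exists in characteristic not two, so the paper offers no template by which your $(3)\Rightarrow(1)$ step could be completed; in characteristic zero or odd characteristic the implication remains open.
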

The implications $(1)\Rightarrow (2)$, $(1)\Rightarrow (3)$ and $(2)\Leftrightarrow(3)$ are already known (see \cite[Theorem 1]{becher}, \cite[(1.1)]{kar} and \cite[(3.2)]{black}).
However, the implication $(2)\Rightarrow (1)$ or $(3)\Rightarrow (1)$ is still open in general, only known for
algebras of low index or degree or in certain special cases.

An analogous conjecture in characteristic two was formulated in \cite{dolphin3} with two differences:
(i) it was assumed that $\sigma$ is anisotropic;
(ii) the hyperbolicity condition in (2) was replaced by the metabolicity.
Note that the restriction to anisotropic involutions is reasonable as this restriction also must be made for the corresponding statement for bilinear Pfister forms in characteristic two (see \cite[(5.5)]{dolphin3}).
The implications $(1)\Rightarrow(2)$ and $(2)\Rightarrow(3)$ were proved in \cite{dolphin3}.
Also, a proof of $(3)\Rightarrow(1)$ is recently obtained in \cite{mep}.

According to \cite[(3.3)]{dolphin3}, a symmetric bilinear form over a field $F$ of characteristic two is similar to a Pfister form if it is similar to a Pfister form over some separable extension of $F$.
An analogue question for involutions may be considered as follows:
let $(A,\sigma)$ be a central simple algebra with involution over a field $F$ of characteristic two.
Given that $(A,\sigma)_K$ is totally decomposable for some separable extension $K/F$, does it imply that $(A,\sigma)$ is totally decomposable?
This question is in fact a generalization of the implication $(3)\Rightarrow(1)$ of \cref{conj} in characteristic two.

In this work, we study the above question for anisotropic orthogonal involutions.
Our approach is based on a totally singular quadratic form $q_\sigma$ associated to every orthogonal involution $\sigma$ in characteristic two.
This form was defined in \cite{me} and used to obtain some sufficient conditions for anisotropy of orthogonal involutions.
Although the form $q_\sigma$ is not functorial, it will be shown in \cref{sep} that if $K/F$ is a separable extension then $q_{\sigma_K}\simeq (q_\sigma)_K$.
We then prove in \cref{septd} that an $F$-algebra with anisotropic orthogonal involution $(A,\sigma)$ in characteristic two is totally decomposable if $(A,\sigma)_K$ is totally decomposable for some separable extension $K/F$
(note that this result is already known to be false if $\sigma$ is isotropic; see \cref{rem}).
As an application, we prove in \cref{mainn} that the three conditions of \cref{conj} (in characteristic two) are equivalent to:
\\
(4) $(A,\sigma)_K$ is totally decomposable for some separable extension $K/F$.

\section{Preliminaries}
In this work, all fields are implicitly supposed to be of characteristic two.\\
Let $V$ be a finite-dimensional vector space over a field $F$ and let $q$ be a quadratic form on $V$.
We say that $q$ is {\it isotropic} if $q(v)=0$ for some nonzero vector $v\in V$ and {\it anisotropic} otherwise.
The quadratic form $q$ is called {\it totally singular} if there exists a symmetric bilinear form $\mathfrak{b}$ on $V$ such that $q(v)=\mathfrak{b}(v,v)$ for every $v\in V$.
For a field extension $K/F$, the scalar extension of $q$ to $K$ is denoted by $q_K$.

Let $A$ be a central simple algebra over a field $F$.
An {\it involution} on $A$ is an anti-automorphism of order two of $A$.
If $\sigma$ is an involution on $A$ then the restriction $\sigma|_F$ is an automorphism which is either the identity map or of order two.
If $\sigma|_F=\id$ we say that $\sigma$ is of {\it the first kind}.
Otherwise, it is said to be of {\it the second kind}.
An involution of the first kind on $A$ is called {\it symplectic} if it becomes adjoint to an alternating bilinear form over a splitting field of $A$.
Otherwise, it is called {\it orthogonal}.
The sets of {\it symmetric} and {\it alternating} elements of an algebra with involution $(A,\sigma)$ are defined respectively as
\[\sym(A,\sigma)=\{x\in A\mid \sigma(x)=x\}\quad {\rm and}\quad \alt(A,\sigma)=\{x-\sigma(x)\mid x\in A\}.\]
According to \cite[(2.6)]{knus}, an involution $\sigma$ of the first kind on $A$ is orthogonal if and only if $\alt(A,\sigma)\cap F=\{0\}$.
An involution $\sigma$ on $A$ is called {\it isotropic} if there exists a nonzero element $x\in A$ such that $\sigma(x)x=0$ and
{\it anisotropic} otherwise.

For a central simple algebra $A$ over a field $F$ the integer $\sqrt{\dim_FA}$ is called the {\it degree} of $A$ and is denoted by $\deg_FA$.
A central simple algebra of degree two is called a {\it quaternion algebra}.
An algebra with involution is called {\it totally decomposable} if it decomposes as tensor products of quaternion algebras with involution.

\section{Separable  extensions of the alternator form}\label{sec-alt}
Let $(A,\sigma)$ be a central simple algebra with orthogonal involution over a field $F$ and
set
\[S(A,\sigma)=\{x\in A\mid\sigma(x)x\in F\oplus\alt(A,\sigma)\}.\]
Define a map $q_\sigma:S(A,\sigma)\rightarrow F$ as follows:
if $x\in S(A,\sigma)$ then there exists a unique element $\alpha\in F$ satisfying $\sigma(x)x+\alpha\in\alt(A,\sigma)$;
set $q_\sigma(x)=\alpha$.
By \cite[(3.2) and (3.3)]{me}, the set $S(A,\sigma)$ is an $F$-subalgebra of $A$ and $q_\sigma$ is a totally singular quadratic form on $S(A,\sigma)$.
As in \cite{me}, we call $q_\sigma$ the {\it alternator form} of $(A,\sigma)$.
It is easily verified that if $K/F$ is a field extension then $S(A,\sigma)\otimes K\subseteq S(A_K,\sigma_K)$ (see the proof of \cite[(3.6)]{me}).
However, the definition of $q_\sigma$ is not functorial by \cite[(3.18)]{me}.
In this section we show that if $K/F$ is a separable extension then $q_{\sigma_K}\simeq(q_\sigma)_K$.

\begin{lem}\label{quadratic}
Let $(A,\sigma)$ be a central simple algebra with orthogonal involution over a field $F$.
If $K/F$ is a separable quadratic extension, then $S(A_K,\sigma_K)=S(A,\sigma)\otimes K$.
\end{lem}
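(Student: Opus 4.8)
The plan is to prove the equality by a direct coordinate computation, proving both inclusions at once; the reverse inclusion $S(A_K,\sigma_K)\subseteq S(A,\sigma)\otimes K$ is the substantive one, since $S(A,\sigma)\otimes K\subseteq S(A_K,\sigma_K)$ is already recorded in the text. Since $K/F$ is a separable quadratic extension in characteristic two, Artin--Schreier theory lets me write $K=F(\theta)$ with $\theta^2=\theta+a$ for some $a\in F$, so that $\{1,\theta\}$ is an $F$-basis of $K$ and every $x\in A_K$ has a unique expression $x=x_0+x_1\theta$ with $x_0,x_1\in A$. Writing $W=F\oplus\alt(A,\sigma)$ (a genuine direct sum because $\sigma$ is orthogonal), I would first check that the space defining $S(A_K,\sigma_K)$ is exactly $W\otimes_F K$: indeed $\alt(A_K,\sigma_K)=\alt(A,\sigma)\otimes_F K$ because $\sigma_K$ fixes $K$ pointwise, and since $\sigma_K$ remains orthogonal the sum $K\oplus\alt(A_K,\sigma_K)$ is direct and equals $(F\oplus\alt(A,\sigma))\otimes_F K=W\otimes_F K$. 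Relative to the basis $\{1,\theta\}$ one has $W\otimes_F K=(W\otimes 1)\oplus(W\otimes\theta)$, so that $\sigma_K(x)x\in W\otimes_F K$ if and only if each of its two coordinates lies in $W$.

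Next I would expand, using $\sigma_K(x)=\sigma(x_0)+\sigma(x_1)\theta$, the centrality of $\theta$, and $\theta^2=\theta+a$,
\[\sigma_K(x)x=\big(\sigma(x_0)x_0+a\,\sigma(x_1)x_1\big)+\big(\sigma(x_0)x_1+\sigma(x_1)x_0+\sigma(x_1)x_1\big)\theta,\]
and call the two coordinates $P$ and $Q$. The computation hinges on the elementary identity $\sigma(u)v+\sigma(v)u\in\alt(A,\sigma)$, valid for all $u,v\in A$ (since it equals $t-\sigma(t)$ with $t=\sigma(u)v$ in characteristic two). Applying it with $u=x_0$ and $v=x_1$ shows $\sigma(x_0)x_1+\sigma(x_1)x_0\in\alt(A,\sigma)\subseteq W$, whence $Q\in W$ if and only if $\sigma(x_1)x_1\in W$, i.e.\ if and only if $x_1\in S(A,\sigma)$. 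Granting $x_1\in S(A,\sigma)$, the term $a\,\sigma(x_1)x_1$ lies in $W$, so $P\in W$ if and only if $\sigma(x_0)x_0\in W$, i.e.\ $x_0\in S(A,\sigma)$. Combining the two, $x\in S(A_K,\sigma_K)$ if and only if $x_0,x_1\in S(A,\sigma)$, which is precisely $x\in S(A,\sigma)\otimes K$.

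The step I expect to require the most care is the identification of the defining space of $S(A_K,\sigma_K)$ with $W\otimes_F K$, which rests on $\alt$ commuting with scalar extension and on $\sigma_K$ remaining orthogonal, the latter guaranteeing the directness needed to read off coordinates with respect to $\{1,\theta\}$. It is also here, and only here, that separability is essential: for an inseparable quadratic extension one would instead have $\theta^2=a$ with no linear term, the summand $\sigma(x_1)x_1$ would disappear from $Q$, and the condition $Q\in W$ would become vacuous, so that $S(A_K,\sigma_K)$ would properly contain $S(A,\sigma)\otimes K$ in general. The presence of the Artin--Schreier term in $\theta^2=\theta+a$ is exactly what allows the $\theta$-coordinate to detect membership of $x_1$ in $S(A,\sigma)$.
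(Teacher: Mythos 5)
Your proof is correct and follows essentially the same route as the paper: the Artin--Schreier presentation $K=F(\theta)$ with $\theta^2+\theta\in F$, the coordinate expansion of $\sigma_K(x)x$, the identification $\alt(A_K,\sigma_K)=\alt(A,\sigma)\otimes K$, and the key identity $\sigma(u)v+\sigma(v)u\in\alt(A,\sigma)$, deducing first $x_1\in S(A,\sigma)$ from the $\theta$-coordinate and then $x_0\in S(A,\sigma)$ from the constant coordinate. The only difference is presentational: you phrase membership via the subspace $W\otimes_F K$ with $W=F\oplus\alt(A,\sigma)$, whereas the paper names the scalar $q_{\sigma_K}(x)=a\otimes 1+b\otimes\eta$ and tracks the two components explicitly.
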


\begin{proof}
We only need to prove $S(A_K,\sigma_K)\subseteq S(A,\sigma)\otimes K$.
Write $K=F(\eta)$ for some $\eta\in K$ with $\delta:=\eta^2+\eta\in F$.
Then every $x\in S(A_K,\sigma_K)$ can be written as $x=u\otimes1+v\otimes\eta$, where $u,v\in A$.
We have
\begin{align}\label{eq8}
\sigma_K(x)x&=(\sigma(u)\otimes1+\sigma(v)\otimes\eta)(u\otimes1+v\otimes\eta)\nonumber\\
&=(\sigma(u)u+\delta \sigma(v)v)\otimes1+(\sigma(u)v+\sigma(v)u+\sigma(v)v)\otimes\eta.
\end{align}
Write $q_{\sigma_K}(x)=a\otimes1+b\otimes\eta$ for some $a,b \in F$, so that $\sigma_K(x)x+a\otimes1+b\otimes\eta\in\alt(A_K,\sigma_K)$.
Since $\alt(A_K,\sigma_K)=\alt(A,\sigma)\otimes K$, using (\ref{eq8}) we get
\begin{align}
\sigma(u)u+\delta \sigma(v)v+a\in\alt(A,\sigma),\label{eq1}\\
\sigma(u)v+\sigma(v)u+\sigma(v)v+b\in\alt(A,\sigma).\label{eq2}
\end{align}
As $\sigma(u)v+\sigma(v)u\in\alt(A,\sigma)$,
 (\ref{eq2}) implies that $\sigma(v)v+b\in\alt(A,\sigma)$, i.e., $v\in S(A,\sigma)$ and $q_\sigma(v)=b$.
Using (\ref{eq1}) we get
\[\sigma(u)u+a+b\delta=(\sigma(u)u+\delta\sigma(v)v+a)+\delta(\sigma(v)v+b)\in\alt(A,\sigma).\]
Hence, $u\in S(A,\sigma)$ and $q_\sigma(u)=a+b\delta$.
It follows that $x\in S(A,\sigma)\otimes K$, proving the result.
\end{proof}

\begin{lem}\label{alt}
Let $(A,\sigma)$ be a central simple algebra with involution over a field $F$ and let $x=\sum_{i=1}^nx_i$, where $x_1,\cdots,x_n\in A$.
Then $\sigma(x)x+\sum_{i=1}^n\sigma(x_i)x_i\in\alt(A,\sigma)$.
\end{lem}

\begin{proof}
We have
\begin{align*}
\sigma(x)x=\textstyle\sum_{i=1}^{n}\sigma(x_i)\cdot\sum_{i=1}^nx_i=\sum_{i=1}^{n}\sigma(x_i)x_i+\sum_{i\neq j}\sigma(x_i)x_j.
\end{align*}
Hence,
\begin{align*}
\sigma(x)x+\textstyle\sum_{i=1}^n\sigma(x_i)x_i&=\textstyle\sum_{i\neq j}\sigma(x_i)x_j=\textstyle\sum_{i<j}\sigma(x_i)x_j+\sum_{i<j}\sigma(x_j)x_i\\
&=\textstyle\sum_{i<j}(\sigma(x_i)x_j-\sigma(\sigma(x_i)x_j))\in\alt(A,\sigma).\qedhere
\end{align*}
\end{proof}

\begin{lem}\label{odd}
Let $(A,\sigma)$ be a central simple algebra with orthogonal involution over a field $F$.
If $K/F$ is a field extension of odd degree, then $S(A_K,\sigma_K)=S(A,\sigma)\otimes K$.
\end{lem}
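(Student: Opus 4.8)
The plan is to prove the nontrivial inclusion $S(A_K,\sigma_K)\subseteq S(A,\sigma)\otimes K$, since the reverse inclusion is already recorded above. Fix an $F$-basis $e_1,\dots,e_n$ of $K$ and write an arbitrary $x\in S(A_K,\sigma_K)$ as $x=\sum_{i=1}^nu_i\otimes e_i$ with $u_i\in A$. Because $S(A,\sigma)\otimes K=\bigoplus_iS(A,\sigma)\otimes e_i$, it suffices to show that each $u_i$ lies in $S(A,\sigma)$, i.e. that $\sigma(u_i)u_i\in F\oplus\alt(A,\sigma)$.

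First I would isolate the diagonal terms. Applying \cref{alt} to the decomposition $x=\sum_i(u_i\otimes e_i)$ inside $(A_K,\sigma_K)$ gives
\[\sigma_K(x)x+\textstyle\sum_{i=1}^n\sigma(u_i)u_i\otimes e_i^2\in\alt(A_K,\sigma_K)=\alt(A,\sigma)\otimes K,\]
using that $\sigma_K(u_i\otimes e_i)(u_i\otimes e_i)=\sigma(u_i)u_i\otimes e_i^2$. Since $x\in S(A_K,\sigma_K)$ we have $\sigma_K(x)x\in K\oplus\alt(A_K,\sigma_K)$, and as $K\oplus\alt(A_K,\sigma_K)=(F\oplus\alt(A,\sigma))\otimes K$, combining the two memberships (in characteristic two) yields
\[\textstyle\sum_{i=1}^n\sigma(u_i)u_i\otimes e_i^2\in(F\oplus\alt(A,\sigma))\otimes K.\]
Everything is thereby reduced to a statement about the squared family $e_1^2,\dots,e_n^2$.

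The key step, and the main obstacle, is to show that $e_1^2,\dots,e_n^2$ is again an $F$-basis of $K$. Granting this, $A_K=\bigoplus_iA\otimes e_i^2$ and $(F\oplus\alt(A,\sigma))\otimes K=\bigoplus_i(F\oplus\alt(A,\sigma))\otimes e_i^2$, so comparing $e_i^2$-components in the last display forces $\sigma(u_i)u_i\in F\oplus\alt(A,\sigma)$ for every $i$, which is exactly $u_i\in S(A,\sigma)$, and then $x\in S(A,\sigma)\otimes K$. To establish the basis claim I would let $V$ be the $F$-subspace of $K$ spanned by $e_1^2,\dots,e_n^2$. For $k=\sum_ia_ie_i$ one has $k^2=\sum_ia_i^2e_i^2\in V$, so $K^2\subseteq V$; moreover $(\sum_ia_ie_i^2)(\sum_jb_je_j^2)=\sum_{i,j}a_ib_j(e_ie_j)^2\in K^2\subseteq V$, so $V$ is an $F$-subalgebra of $K$, hence a subfield. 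Every $k\in K$ is a root of $(t-k)^2=t^2-k^2\in V[t]$, so $K/V$ is purely inseparable and $[K:V]$ is a power of $2$. Since $[K:V]$ divides the odd number $[K:F]$, we get $[K:V]=1$, i.e. $V=K$; being $n$ generators of the $n$-dimensional space $K$, the elements $e_1^2,\dots,e_n^2$ form a basis.

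I expect the genuinely delicate point to be precisely this basis claim, where the hypothesis enters: oddness of $[K:F]$ is used only to force the $2$-power degree $[K:V]$ to be trivial. Once the squared family is known to be a basis, the remainder is a routine comparison of coordinates combined with \cref{alt}, and no further structural input is needed.
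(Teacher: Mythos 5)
Your proof is correct, and its skeleton matches the paper's: both apply \cref{alt} to dispose of the cross terms and then exploit the oddness of $[K:F]$ through the fact that squaring a basis of $K$ yields again a basis, after which the conclusion is a componentwise comparison. Where you genuinely differ is in how that basis fact is established. The paper writes $K=F(\eta)$ (using, implicitly, that an odd-degree extension in characteristic two is separable and hence simple), takes the basis $1,\eta,\dots,\eta^{2n}$, and observes that $F(\eta^2)=K$ because $[K:F(\eta^2)]$ divides both $2$ and the odd number $[K:F]$, so that $1,\eta^2,\dots,\eta^{4n}$ is again a basis. You instead take an arbitrary $F$-basis $e_1,\dots,e_n$ and show that the span $V$ of $e_1^2,\dots,e_n^2$ is a subfield over which $K$ is purely inseparable, so that $[K:V]$ is a power of $2$ dividing the odd $[K:F]$, forcing $V=K$. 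Your route is more self-contained: it needs neither the primitive element theorem nor the unstated observation that odd degree implies separability in characteristic two, and it proves the basis claim for every basis rather than for one specifically chosen basis. A second, cosmetic difference: you compare components against $(F\oplus\alt(A,\sigma))\otimes K$ directly, whereas the paper expands $\alpha=q_{\sigma_K}(x)$ in the squared basis; these are equivalent bookkeeping devices. Both arguments are complete; yours trades the paper's brevity for slightly greater generality and independence from field-theoretic prerequisites.
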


\begin{proof}
As already observed, it is enough to show that $S(A_K,\sigma_K)\subseteq S(A,\sigma)\otimes K$.
Write $K=F(\eta)$ for some $\eta\in K$ and let $[K:F]=2n+1$.
Then the set $\{1,\eta,\eta^2,\cdots,\eta^{2n}\}$ is a basis of $K$ over $F$.
Let $x\in S(A_K,\sigma_K)$ and set $\alpha=q_{\sigma_K}(x)\in K$.
Write $x=\sum_{i=0}^{2n}x_i\otimes\eta^i$, where $x_i\in A$ for $i=0,\cdots,2n$.
Since $\sigma_K(x)x+\alpha\in\alt(A_K,\sigma_K)$, \cref{alt} implies that
\[\textstyle\sum_{i=0}^{2n}\sigma_K(x_i\otimes\eta^i)\cdot(x_i\otimes\eta^i)+\alpha\in\alt(A_K,\sigma_K).\]
Hence,
\begin{equation}\label{eq3}
\textstyle\sum_{i=0}^{2n}\sigma(x_i)x_i\otimes\eta^{2i}+\alpha\in\alt(A_K,\sigma_K).
\end{equation}
Since $[K:F]$ is odd, we have $F(\eta^2)=K$, hence the set $\{1,\eta^2,\eta^4,\cdots,\eta^{4n}\}$ is a basis of $K$ over $F$.
Write $\alpha=\sum_{i=0}^{2n}\alpha_i\otimes\eta^{2i}$ for some $\alpha_i\in F$, $i=0,\cdots,2n$.
Then (\ref{eq3}) implies that
\[\textstyle\sum_{i=0}^{2n}(\sigma(x_i)x_i+\alpha_i)\otimes\eta^{2i}\in\alt(A_K,\sigma_K).\]
 It follows that $\sigma(x_i)x_i+\alpha_i\in\alt(A,\sigma)$, i.e., $x_i\in S(A,\sigma)$ for every $i$.
\end{proof}

Using \cref{quadratic}, \cref{odd} and basic Galois theory, we have the following result.
\begin{cor}\label{fin}
Let $(A,\sigma)$ be a central simple algebra with orthogonal involution over a field $F$.
If $K/F$ is a finite separable extension, then $S(A_K,\sigma_K)=S(A,\sigma)\otimes K$.
\end{cor}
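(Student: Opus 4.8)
The plan is to reduce the general case to the two extremes already handled, namely separable \emph{quadratic} extensions (\cref{quadratic}) and extensions of \emph{odd} degree (\cref{odd}), and to bridge the gap with Galois theory. I will use two facts throughout. First, the inclusion $S(A,\sigma)\otimes K\subseteq S(A_K,\sigma_K)$ holds for every extension $K/F$ (as recalled before \cref{quadratic}), so only the reverse inclusion needs proof. Second, the property ``$S$ commutes with scalar extension'' is transitive: if $F\subseteq M\subseteq K$ with $S(A_M,\sigma_M)=S(A,\sigma)\otimes M$ and $S(A_K,\sigma_K)=S(A_M,\sigma_M)\otimes K$, then $S(A_K,\sigma_K)=(S(A,\sigma)\otimes M)\otimes_M K=S(A,\sigma)\otimes K$. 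Hence the property propagates along any tower whose individual steps are separable quadratic or of odd degree.

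First I would settle the case of a finite \emph{Galois} extension $L/F$ with group $G=\mathrm{Gal}(L/F)$. Choose a Sylow $2$-subgroup $P\leq G$ together with a chain $P=P_0\supseteq P_1\supseteq\cdots\supseteq P_a=1$ of successive index $2$ (available since $P$ is a $2$-group). Passing to fixed fields gives a tower
\[F=L^G\subseteq L^P\subseteq L^{P_1}\subseteq\cdots\subseteq L^{P_a}=L,\]
each member being separable over $F$. The bottom step $L^P/F$ has odd degree $[G:P]$, so \cref{odd} applies, and each higher step $L^{P_{i+1}}/L^{P_i}$ is a separable quadratic extension, to which \cref{quadratic} applies. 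By transitivity this gives $S(A_L,\sigma_L)=S(A,\sigma)\otimes L$.

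For a general finite separable $K/F$, the hard part is that one cannot simply climb a subgroup chain from $G$ down to $\mathrm{Gal}(L/K)$ through index-$2$ and odd-index steps: for example no such chain joins $S_4$ to a point-stabiliser $S_3$, because the only index-$2$ subgroup $A_4$ does not contain $S_3$. I would instead pass to the Galois closure $L/F$ of $K/F$ (again separable) and descend from $L$ to $K$ by faithful flatness. Applying the general inclusion twice, $S(A,\sigma)\otimes_F L\subseteq S(A_K,\sigma_K)\otimes_K L\subseteq S(A_L,\sigma_L)$; the two ends coincide by the Galois case, so $S(A_K,\sigma_K)\otimes_K L=S(A,\sigma)\otimes_F L$. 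Since $S(A,\sigma)\otimes_F K\subseteq S(A_K,\sigma_K)$ and $L$ is free, hence faithfully flat, over $K$, comparing $K$-dimensions (using $\dim_K U=\dim_L(U\otimes_K L)$) forces these two $K$-subspaces of $A_K$ to be equal. This yields $S(A_K,\sigma_K)=S(A,\sigma)\otimes K$, as desired.
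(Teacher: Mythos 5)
Your proof is correct and follows exactly the route the paper indicates: the paper's entire proof is the one-line remark that the corollary follows from \cref{quadratic}, \cref{odd} and basic Galois theory, which is precisely the Sylow-tower argument you spell out. Your descent from the Galois closure $L$ back down to $K$ (via the dimension count over $L$) correctly fills in the one genuinely non-obvious detail that the paper leaves to the reader.
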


\begin{thm}\label{sep}
Let $(A,\sigma)$ be a central simple algebra with orthogonal involution over a field $F$.
If $K/F$ is a separable extension, then $S(A_K,\sigma_K)=S(A,\sigma)\otimes K$ and $(S(A_K,\sigma_K),q_{\sigma_K})\simeq(S(A,\sigma),q_\sigma)_K$.
\end{thm}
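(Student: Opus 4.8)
The plan is to reduce everything to the finite separable case already settled in \cref{fin}, together with a separate treatment of purely transcendental extensions. Since the inclusion $S(A,\sigma)\otimes K\subseteq S(A_K,\sigma_K)$ is noted in the text, the set-theoretic content is the reverse inclusion, and I claim the isometry then comes essentially for free. A totally singular quadratic form satisfies $q(u+v)=q(u)+q(v)$ and $q(cv)=c^2q(v)$, so it is determined by its values on any spanning set. For $s\in S(A,\sigma)$ one has $\sigma(s)s+q_\sigma(s)\in\alt(A,\sigma)\subseteq\alt(A_K,\sigma_K)$, whence $q_{\sigma_K}(s)=q_\sigma(s)$; combined with the two displayed relations this forces $q_{\sigma_K}$ and $(q_\sigma)_K$ to agree on all of $S(A,\sigma)\otimes K$ as soon as the two underlying sets coincide. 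So I concentrate on proving $S(A_K,\sigma_K)\subseteq S(A,\sigma)\otimes K$.

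First I would pass to the finitely generated case by a support argument. Given $x\in S(A_K,\sigma_K)$, the element $x$, the scalar $\alpha=q_{\sigma_K}(x)$, and an element $b\in A_K$ with $\sigma_K(x)x+\alpha=b-\sigma_K(b)$ all lie in $A_{K_0}$ for some finitely generated subextension $K_0/F$; since $K/F$ is separable so is $K_0/F$, and using $\alt(A_{K_0},\sigma_{K_0})=\alt(A,\sigma)\otimes K_0$ one gets $x\in S(A_{K_0},\sigma_{K_0})$. Thus it suffices to prove the inclusion for $K_0/F$ finitely generated and separable. Choosing a separating transcendence basis $t_1,\dots,t_r$ exhibits $K_0$ as a finite separable extension of the purely transcendental field $F(t_1,\dots,t_r)$, so by \cref{fin} I am reduced to the purely transcendental case, and by iterating one variable at a time to $K=F(t)$ (the single-variable argument being valid over any base field of characteristic two).

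For $F(t)$ I would argue exactly as in \cref{odd}. Clearing denominators reduces an element of $S(A_{F(t)},\sigma_{F(t)})$ to $y=\sum_i y_i\otimes t^i\in A\otimes F[t]$, and it is enough to show each $y_i\in S(A,\sigma)$. Applying \cref{alt} to the decomposition $y=\sum_i(y_i\otimes t^i)$ kills the cross terms and yields $\sum_i\sigma(y_i)y_i\otimes t^{2i}\in F(t)\oplus\bigl(\alt(A,\sigma)\otimes F(t)\bigr)$. Fixing an $F$-subspace complement $W$ with $A=F\oplus\alt(A,\sigma)\oplus W$ and writing each $\sigma(y_i)y_i$ in these coordinates, the linear independence of the monomials $t^{2i}$ forces the $W$-component of every $\sigma(y_i)y_i$ to vanish, i.e.\ $\sigma(y_i)y_i\in F\oplus\alt(A,\sigma)$ and $y_i\in S(A,\sigma)$, as required.

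The step I expect to be the main obstacle is precisely this purely transcendental case, since none of \cref{quadratic}, \cref{odd} or \cref{fin} addresses non-algebraic extensions; the resolution is the complement-plus-monomial-independence argument above, which parallels \cref{odd} with $\{t^{2i}\}$ in place of $\{\eta^{2i}\}$. Once the inclusion is established through the tower $F\subseteq F(t_1,\dots,t_r)\subseteq K_0$, and hence for every finitely generated separable $K_0$, taking the directed union over all finitely generated separable subextensions gives $S(A_K,\sigma_K)=S(A,\sigma)\otimes K$ for an arbitrary separable $K/F$, and the isometry follows from the first paragraph.
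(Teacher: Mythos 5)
Your proof is correct, and it covers strictly more than the paper's own argument. The paper's proof is the short form of your reduction: given $x\in S(A_K,\sigma_K)$ with $\alpha=q_{\sigma_K}(x)$, it picks a subfield $L\subseteq K$ with $[L:F]<\infty$ containing the supports of $x$ and $\alpha$, notes that $\sigma_L(x)x+\alpha\in\alt(A_K,\sigma_K)\cap A_L=\alt(A_L,\sigma_L)$, applies \cref{fin} to $L$, and quotes the identity $q_{\sigma_K}|_{S(A,\sigma)\otimes K}=(q_\sigma)_K$ from the proof of \cite[(3.6)]{me} (your first paragraph spells out this same observation). But a \emph{finite} such $L$ exists only when $K/F$ is algebraic; for a transcendental separable extension the subextension generated by the supports is merely finitely generated, so the paper's proof tacitly reads ``separable'' as ``separable algebraic''. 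Your route handles the general (MacLane) notion of separability: reduce to a finitely generated subextension $K_0$ (correctly keeping the witness $b$ of the alternating element in the support, which is what lets you descend membership in $S$), split $K_0/F$ by a separating transcendence basis into a purely transcendental extension followed by a finite separable one, apply \cref{fin} to the top layer, and treat $F(t)$ by your complement argument. That last step is sound: orthogonality of $\sigma$ gives $F\cap\alt(A,\sigma)=\{0\}$, so a complement $W$ with $A=F\oplus\alt(A,\sigma)\oplus W$ exists, and \cref{alt} together with the $F$-linear independence of the monomials $t^{2i}$ forces the $W$-components of the $\sigma(y_i)y_i$ to vanish, exactly paralleling \cref{odd}. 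In short, both proofs share the same skeleton (descend to small subextensions and invoke \cref{fin}); the paper's version is shorter but valid only for algebraic extensions, while yours buys the theorem for arbitrary separable extensions, which is the generality that \cref{septd} and condition (4) of \cref{mainn} literally quantify over.
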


\begin{proof}
We already know that $S(A,\sigma)\otimes K\subseteq S(A_K,\sigma_K)$.
Let $x\in S(A_K,\sigma_K)$ and set $\alpha=q_{\sigma_K}(x)\in K$.
As $K/F$ is separable, one can find a subfield $L$ of $K$ containing $F$ with $[L:F]<\infty$ for which $x\in A_L$ and $\alpha\in L$.
We have
\[\sigma_L(x)x+\alpha=\sigma_K(x)x+\alpha\in\alt(A_K,\sigma_K)\cap A_L=\alt(A_L,\sigma_L).\]
It follows that $x\in S(A_L,\sigma_L)$.
By \cref{fin} we have $x\in S(A,\sigma)\otimes L\subseteq S(A,\sigma)\otimes K$.
This proves that $S(A_K,\sigma_K)=S(A,\sigma)\otimes K$.
On the other hand, it is easily seen that $q_{\sigma_K}|_{S(A,\sigma)\otimes K}=(q_\sigma)_K$ (see the proof of \cite[(3.6)]{me}).
Hence, $q_{\sigma_K}=(q_\sigma)_K$, proving the result.
\end{proof}

\begin{defi}
An algebra with involution $(A,\sigma)$ (or the involution $\sigma$ itself) is called {\it direct} if there is no nonzero element $x\in A$ satisfying $\sigma(x)x\in\alt(A,\sigma)$.
\end{defi}
Direct involutions were introduced in \cite{dolphin2}.
Note that every direct involution is anisotropic.
Also, by \cite[(9.3)]{dolphin2} every direct involution stays direct over a separable extension of the ground field.
We present here an alternative proof of this fact.
Recall that any anisotropic totally singular quadratic form remains anisotropic over a separable extension.
This follows from the corresponding result for bilinear forms (see \cite[(10.2.1)]{kne}).
\begin{cor}
Let $K/F$ be a field extension of odd degree and let $(A,\sigma)$ be a central simple algebra with orthogonal involution over
$F$.
Then $\sigma$ is direct if and only if $\sigma_K$ is direct.
\end{cor}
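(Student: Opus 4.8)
The plan is to translate the notion of directness into a statement about the alternator form $q_\sigma$ and then appeal to \cref{sep} together with the recalled anisotropy-preservation property of totally singular forms.

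First I would record the basic dictionary between directness and anisotropy of $q_\sigma$. By definition, $\sigma$ is direct precisely when there is no nonzero $x\in A$ with $\sigma(x)x\in\alt(A,\sigma)$. But $\sigma(x)x\in\alt(A,\sigma)$ holds if and only if $x\in S(A,\sigma)$ and $q_\sigma(x)=0$, since $\alt(A,\sigma)\subseteq F\oplus\alt(A,\sigma)$ and $q_\sigma(x)$ is by construction the unique scalar $\alpha$ with $\sigma(x)x+\alpha\in\alt(A,\sigma)$. Hence $\sigma$ is direct if and only if $q_\sigma$ is anisotropic, and likewise $\sigma_K$ is direct if and only if $q_{\sigma_K}$ is anisotropic.

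Next I would observe that, as we work in characteristic two, an extension of odd degree is automatically separable: the inseparable degree is a power of $2$ dividing $[K:F]$, hence equals $1$. Therefore \cref{sep} (via \cref{fin}) applies and yields $q_{\sigma_K}\simeq(q_\sigma)_K$, and the equivalence follows. If $\sigma$ is direct, then $q_\sigma$ is anisotropic; since $q_\sigma$ is totally singular and $K/F$ is separable, the recalled result guarantees that $(q_\sigma)_K\simeq q_{\sigma_K}$ is anisotropic, so $\sigma_K$ is direct. Conversely, if $\sigma$ is not direct, I would choose a nonzero $x\in A$ with $\sigma(x)x\in\alt(A,\sigma)$; viewing $x$ in $A_K$ and using $\alt(A,\sigma)\otimes K=\alt(A_K,\sigma_K)$, we get $\sigma_K(x)x\in\alt(A_K,\sigma_K)$, so $\sigma_K$ is not direct.

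I do not expect a genuine obstacle here: once the reformulation in terms of $q_\sigma$ is in place, the forward implication is immediate from \cref{sep} and the stability of anisotropic totally singular forms under separable extension, while the reverse implication is a trivial restriction argument valid over any field extension. The only points requiring care are the verification that $\sigma(x)x\in\alt(A,\sigma)$ is equivalent to $q_\sigma(x)=0$, and the remark that odd degree forces separability in characteristic two; the substantive work has already been carried out in \cref{sep}.
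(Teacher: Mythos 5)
Your proposal is correct and follows essentially the same route as the paper: it translates directness into anisotropy of the alternator form (which the paper simply cites as \cite[(3.8)]{me} rather than reproving), notes that an odd degree extension in characteristic two is automatically separable, and then applies \cref{sep} together with the recalled fact that anisotropic totally singular forms remain anisotropic under separable extensions. Your elementary restriction argument for the converse is a harmless variant of the paper's use of the isometry $q_{\sigma_K}\simeq(q_\sigma)_K$ in both directions.
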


\begin{proof}
According to \cite[(3.8)]{me}, $\sigma$ is direct if and only if $q_\sigma$ is anisotropic.
The conclusion therefore follows from \cref{sep} and the above result on separable extensions.
\end{proof}
\section{The main result}\label{sec-main}
In this section we study orthogonal Pfister involutions in characteristic two.
We first recall a definition from \cite{mn1}.
\begin{defi}
A unitary algebra $R$ over a field $F$ is called a {\it totally singular conic $F$-algebra} if $x^2\in F$ for every $x\in R$.
\end{defi}
In \cite{mn1}, it was shown that a central simple algebra of degree $2^n$ with involution of the first kind is totally decomposable if and only if there exists a $2^n$-dimensional totally singular conic $F$-algebra $\Phi\subseteq \sym(A,\sigma)$, which is generated as an $F$-algebra by $n$ elements and satisfies $C_A(\Phi)=\Phi$, where $C_A(\Phi)$ is the centralizer of $\Phi$ in $A$ (see \cite[(4.6)]{mn1}).
According to \cite[(5.10)]{mn1}, if $\sigma$ is orthogonal then $\Phi$ is uniquely determined, up to isomorphism.

The next result gives another criterion for an anisotropic orthogonal involution to  be totally decomposable.

\begin{prop}\label{td}
Let $(A,\sigma)$ be a central simple algebra of degree $2^n$ with anisotropic orthogonal involution over a field $F$.
Then $(A,\sigma)$ is totally decomposable if and only if $S(A,\sigma)\subseteq\sym(A,\sigma)$ and $\dim_FS(A,\sigma)=2^n$.
\end{prop}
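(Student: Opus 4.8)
The plan is to read off both implications from the decomposability criterion of \cite[(4.6)]{mn1}, taking as candidate subalgebra $\Phi=S(A,\sigma)$ itself. That criterion says $(A,\sigma)$ is totally decomposable exactly when $\sym(A,\sigma)$ contains a $2^n$-dimensional totally singular conic subalgebra $\Phi$ that is generated by $n$ elements and satisfies $C_A(\Phi)=\Phi$. Thus everything reduces to testing $S(A,\sigma)$ against four requirements—its dimension, its containment in $\sym(A,\sigma)$, the conic identity $x^2\in F$, and self-centralisation—and the whole point of the anisotropy hypothesis is to make $S(A,\sigma)$ rigid enough to meet them. The invariance result \cref{sep} is the tool I would use to transport these requirements across separable extensions.

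For the forward implication I would start from a subalgebra $\Phi$ furnished by \cite[(4.6)]{mn1}. Since $\Phi\subseteq\sym(A,\sigma)$ and $x^2\in F$ for $x\in\Phi$, one has $\sigma(x)x=x^2\in F\oplus\alt(A,\sigma)$, so $\Phi\subseteq S(A,\sigma)$ with $q_\sigma$ restricting to squaring on $\Phi$. It then suffices to prove the bound $\dim_F S(A,\sigma)\le 2^n$, for then $\Phi\subseteq S(A,\sigma)$ together with $\dim_F\Phi=2^n$ forces $S(A,\sigma)=\Phi$, delivering $S(A,\sigma)\subseteq\sym(A,\sigma)$ and $\dim_F S(A,\sigma)=2^n$ at one stroke. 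To get the bound I would argue that anisotropy, through the behaviour of the alternator form $q_\sigma$, prevents $S(A,\sigma)$ from having a nonzero nilradical, so that $S(A,\sigma)=\prod_i L_i$ is a product of fields; writing $e_i$ for the primitive idempotents, each $L_i$ sits inside the corner $e_iAe_i$, a central simple $F$-algebra of some degree $r_i$ with $\sum_i r_i=2^n$, whence $[L_i:F]\le r_i$ and $\dim_F S(A,\sigma)=\sum_i[L_i:F]\le 2^n$.

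For the converse I would verify the criterion directly for $\Phi=S(A,\sigma)$. The containment $S(A,\sigma)\subseteq\sym(A,\sigma)$ immediately makes $S(A,\sigma)$ commutative, since for $x,y\in S(A,\sigma)$ the product $xy$ again lies in $\sym(A,\sigma)$, so $xy=\sigma(xy)=\sigma(y)\sigma(x)=yx$. Combining reducedness (again from anisotropy) with the hypothesis $\dim_F S(A,\sigma)=2^n$ and the corner estimate above forces $[L_i:F]=r_i$ for every $i$, so each $L_i$ is a maximal subfield of $e_iAe_i$ and hence $C_A(S(A,\sigma))=\prod_i C_{e_iAe_i}(L_i)=\prod_i L_i=S(A,\sigma)$. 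What remains is the conic identity $x^2\in F$ for all $x\in S(A,\sigma)$; granted this, the idempotents satisfy $e_i=e_i^2\in F$, so $S(A,\sigma)$ is a single purely inseparable field of exponent one and degree $2^n$, automatically generated by $n$ square roots, and \cite[(4.6)]{mn1} applies.

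The hard part is exactly this conic identity, equivalently $q_\sigma(x)=x^2$ on $S(A,\sigma)$, i.e. the vanishing of the additive map $x\mapsto x^2+q_\sigma(x)$ with values in $\alt(A,\sigma)\cap S(A,\sigma)$; this is where anisotropy is indispensable and where the isotropic case genuinely breaks down (cf. \cref{rem}). The approach I would take is to rule out a proper separable subextension, or an element of exponent greater than one, in any field factor of $S(A,\sigma)$: using \cref{sep} to pass to a finite separable $K/F$, under which $S(A_K,\sigma_K)=S(A,\sigma)\otimes K$ and $q_{\sigma_K}=(q_\sigma)_K$ and under which an anisotropic totally singular form stays anisotropic, such a subextension would split off a nontrivial idempotent and thereby produce a nonzero isotropic element, contradicting anisotropy. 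The delicate point—and the step I expect to be the main obstacle—is to run this argument from the bare anisotropy assumed in the statement, rather than from the stronger directness that the alternator form most naturally controls.
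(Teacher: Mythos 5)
Your plan—testing $\Phi=S(A,\sigma)$ against the four requirements of \cite[(4.6)]{mn1}—is sound in outline, and several steps are correct: $\Phi\subseteq S(A,\sigma)$ in the forward direction, and, in the converse, commutativity ($xy=\sigma(xy)=yx$), reducedness (a symmetric $y$ with $y^2=0$ has $\sigma(y)y=0$), and the self-centralization argument via the corners $e_iAe_i$. But there is a genuine gap, and it is the \emph{same} gap in both directions: every remaining step needs $S(A,\sigma)$ to be controlled by directness of $\sigma$ (equivalently, anisotropy of the alternator form $q_\sigma$), whereas the hypothesis only gives anisotropy of $\sigma$, which constrains nothing about non-symmetric elements of $S(A,\sigma)$. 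In the forward direction your bound $\dim_FS(A,\sigma)\le 2^n$ requires $S(A,\sigma)$ to be a product of fields, hence commutative and reduced; neither is available there, because $S(A,\sigma)\subseteq\sym(A,\sigma)$ is exactly what you are trying to prove. These properties are not formal consequences of the definition: for the transpose involution on $M_2(F)$ one checks that $S(A,\sigma)$ is a noncommutative three-dimensional algebra containing nilpotents such as $\left(\begin{smallmatrix}1&1\\1&1\end{smallmatrix}\right)$. Anisotropy of $\sigma$ says $\sigma(x)x\neq0$ for $x\neq0$, which yields no contradiction with a non-symmetric nilpotent $x\in S(A,\sigma)$; excluding such elements is precisely anisotropy of $q_\sigma$, i.e.\ directness, and nothing in your argument bridges that gap (the paper itself only asserts that direct implies anisotropic, not the converse).

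In the converse direction, the one step you leave open—the conic identity $x^2\in F$—is the actual mathematical content of the proposition, and the sketch you offer for it fails: a nontrivial \emph{symmetric} idempotent $e\in S(A_K,\sigma_K)$ does not produce an isotropic element, since $\sigma_K(e)e=e\neq0$, and anisotropic involutions admit such idempotents in abundance (project onto a partial sum of any anisotropic diagonalized bilinear form). So passing to a separable extension via \cref{sep} and splitting off idempotents contradicts nothing. What is needed is exactly the pair of results from \cite{me} that the paper's proof invokes: by \cite[(3.10)]{me}, anisotropy together with $S(A,\sigma)\subseteq\sym(A,\sigma)$ forces $\sigma$ to be direct, and by \cite[(3.7)]{me} a direct involution has $S(A,\sigma)$ a field with $x^2\in F$ for every element; from there the paper concludes as you would (maximal subfield, hence self-centralizing, and generated by $n$ elements because it is purely inseparable of exponent one), and it handles the forward direction by citing \cite[(4.1)]{me} rather than re-deriving it. You identified this obstacle yourself in your closing sentence; as written, the proposal does not get past it, and \cref{sep} plays no role in the paper's proof of this proposition.
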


\begin{proof}
The `only if' implication follows from \cite[(4.1)]{me}.
Conversely, suppose that $S(A,\sigma)\subseteq\sym(A,\sigma)$ be $2^n$-dimensional.
By \cite[(3.10)]{me}, $\sigma$ is direct.
Hence, \cite[(3.7)]{me} implies  that $S(A,\sigma)$ is a field satisfying $x^2\in F$ for every $x\in S(A,\sigma)$.
In particular, $S(A,\sigma)$ is a totally singular conic $F$-algebra.
Since $\dim_FS(A,\sigma)=2^n=\deg_FA$, $S(A,\sigma)$ is a maximal subfield of $A$, hence $C_A(S(A,\sigma))=S(A,\sigma)$.
Finally, note that $S(A,\sigma)$ is generated as an $F$-algebra by $n$ elements, because $S(A,\sigma)$ is a field.
Thus, $(A,\sigma)$ is totally decomposable by \cite[(4.6)]{mn1}.
\end{proof}
Recall that an algebra with involution $(A,\sigma)$ (or the involution $\sigma$ itself) over a field $F$ is called {\it metabolic} if there is an idempotent $e\in A$ with $\dim_FeA=\frac{1}{2}\dim_FA$ such that $\sigma(e)e=0$ (see \cite[\S A.1]{ber}).
In this case, we say that $e$ is a {\it metabolic} idempotent with respect to $\sigma$.

We now recall some definitions from \cite{dolphin2}.
\begin{defis}
Let $(A,\sigma)$ be a central simple algebra with involution over a field $F$.
An algebra with involution $(B,\tau)$ is called a {\it part} of $(A,\sigma)$ if there exist a symmetric idempotent $e\in A$ and an $F$-algebra isomorphism $f:eAe\iso B$ such that $f\circ\tau=\sigma\circ f$
(note that the $F$-algebra $eAe$ is central simple with unit element $e$).
In this case, we say that $e$ {\it defines} $(B,\tau)$.
The part of $(A,\sigma)$ defined by $1-e$ is called a {\it counterpart} of $(B,\tau)$ in $(A,\sigma)$.
A part $(B,\tau)$ of $(A,\sigma)$ is called the {\it anisotropic part} of $(A,\sigma)$, if $(B,\tau)$ is anisotropic and any counterpart of $(B,\tau)$ in $(A,\sigma)$ is metabolic.
We denote the anisotropic part of $(A,\sigma)$ by $(A,\sigma)_{\an}$.
Finally, the {\it direct part} of $(A,\sigma)$ is a part $(C,\rho)$ of $(A,\sigma)_{\an}$ which is direct and any of its counterparts in $(A,\sigma)_{\an}$ is symplectic.
\end{defis}
Note that by \cite[(4.5) and (7.5)]{dolphin2} the anisotropic and direct parts of $(A,\sigma)$ are uniquely determined, up to isomorphism.
\begin{thm}\label{septd}
Let $K/F$ be a separable field extension and let $(A,\sigma)$ be a central simple algebra of degree $2^n$ with anisotropic orthogonal involution over $F$.
Then $(A,\sigma)$ is totally decomposable if and only if $(A,\sigma)_K$ is totally decomposable.
\end{thm}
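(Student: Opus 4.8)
The `only if' implication is immediate: a decomposition of $(A,\sigma)$ as a tensor product of quaternion algebras with involution base-changes to such a decomposition of $(A,\sigma)_K$, so total decomposability passes up to any extension. All the content lies in the `if' implication, which the plan is to route entirely through \cref{td} and \cref{sep}. Assume $(A,\sigma)_K$ is totally decomposable. Since $(A,\sigma)$ is itself anisotropic orthogonal of degree $2^n$, \cref{td} reduces the goal to verifying $S(A,\sigma)\subseteq\sym(A,\sigma)$ and $\dim_FS(A,\sigma)=2^n$, and I would deduce both from the analogous statements over $K$. By \cref{sep} we have $S(A_K,\sigma_K)=S(A,\sigma)\otimes K$, so $\dim_KS(A_K,\sigma_K)=\dim_FS(A,\sigma)$; moreover $\sym(A_K,\sigma_K)=\sym(A,\sigma)\otimes K$, since the symmetric elements form the kernel of the $F$-linear map $\sigma-\id$ and this kernel commutes with the field base change $-\otimes_FK$. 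Hence, once it is known that $\dim_KS(A_K,\sigma_K)=2^n$ and $S(A_K,\sigma_K)\subseteq\sym(A_K,\sigma_K)$, the dimension condition over $F$ follows at once, while for the inclusion any $x\in S(A,\sigma)$ has $x\otimes1\in S(A_K,\sigma_K)\subseteq\sym(A_K,\sigma_K)$, so $\sigma(x)\otimes1=\sigma_K(x\otimes1)=x\otimes1$ and thus $\sigma(x)=x$. Applying \cref{td} to $(A,\sigma)$ then gives the result. To obtain the two statements over $K$ I would apply \cref{td} to $(A,\sigma)_K$: its degree is $2^n$, it is orthogonal (orthogonality being preserved under scalar extension) and totally decomposable by hypothesis, so the only remaining point is that $\sigma_K$ is anisotropic.

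The anisotropy of $\sigma_K$ is the heart of the matter. Suppose it fails; then $(A,\sigma)_K$, being totally decomposable, is metabolic by the characteristic two dichotomy (the implication $(1)\Rightarrow(2)$ of the analogue conjecture). A metabolic idempotent lies in $A_L$ for some finite separable subextension $L/F$ of $K$, so $\sigma_L$ is already metabolic; by the primitive element theorem and basic Galois theory—exactly as in the reduction of \cref{fin} to \cref{quadratic} and \cref{odd}—one further reduces to the case where $L/F$ is either of odd degree or separable quadratic. In the odd-degree case anisotropy is preserved by a Springer-type argument, so no metabolicity can appear there. In the separable quadratic case $L=F(\eta)$ with $\eta^2+\eta\in F$, I would expand $\sigma_L(x)x$ for $x=u\otimes1+v\otimes\eta$ as in \eqref{eq8} and show, using the anisotropy of $\sigma$ over $F$ together with the metabolic structure, that $\sigma_L(x)x=0$ forces $u=v=0$.

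The main obstacle is precisely this separable quadratic step: excluding that an anisotropic orthogonal involution becomes isotropic (metabolic) over a separable quadratic extension. Via \cref{sep} and the identity $q_{\sigma_K}=(q_\sigma)_K$ this is parallel to the preservation of anisotropy of totally singular forms over separable extensions recalled in \cref{sec-alt}; but one must proceed with care, since the anisotropy of $\sigma$ yields the anisotropy of $q_\sigma$ only when $\sigma$ is direct (\cite[(3.8)]{me}). For this reason the argument cannot run purely through the alternator form and must combine the explicit description of the metabolic $\sigma_L$ arising from the total decomposability of $(A,\sigma)_K$ with the anisotropy hypothesis on $\sigma$ itself.
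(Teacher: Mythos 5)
Your reduction of the theorem to \cref{td} and \cref{sep} is exactly the paper's skeleton: once $\sigma_K$ is known to be anisotropic, \cref{td} applied over $K$, the equalities $S(A_K,\sigma_K)=S(A,\sigma)\otimes K$ and $\sym(A_K,\sigma_K)=\sym(A,\sigma)\otimes K$, and then \cref{td} applied over $F$ finish the proof, and you set this up correctly. You also correctly identify that the anisotropy of $\sigma_K$ is the only real issue. But that is precisely where your proposal has a genuine gap, and your sketch does not close it. Setting $\sigma_L(x)x=0$ in \eqref{eq8} gives the equations $\sigma(u)u+\delta\sigma(v)v=0$ and $\sigma(u)v+\sigma(v)u+\sigma(v)v=0$, and these are in no evident contradiction with the anisotropy of $\sigma$: anisotropy only says $\sigma(u)u\neq0\neq\sigma(v)v$ for nonzero $u,v$, and the relation $\sigma(u)u=\delta\sigma(v)v$ can perfectly well hold a priori. (For quaternion algebras one can exclude it by a determinant argument, but that does not generalize in any obvious way.) Worse, your Galois-theoretic reduction undermines the one extra tool you hoped to use: after passing to the Galois closure $E/F$ and factoring it as an odd-degree extension followed by a tower of separable quadratic extensions, the intermediate fields of that tower need not embed into $K$, so at a given quadratic step you have neither total decomposability nor any ``metabolic structure'' available --- only the inductive hypothesis of anisotropy. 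You would therefore need the completely general statement that an anisotropic orthogonal involution in characteristic two stays anisotropic under every separable quadratic extension, with no side hypotheses; nothing you invoke or sketch proves this (and the odd-degree ``Springer-type'' step for involutions in characteristic two is likewise asserted rather than proved or cited).

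The paper sidesteps this entire difficulty, and this is the idea your proposal is missing: it never tries to transport anisotropy up the extension. Instead it transports \emph{directness}. Since $\sigma$ is anisotropic, it has a nontrivial direct part by \cite[(7.7)]{dolphin2}; by \cite[(9.3)]{dolphin2} directness --- unlike anisotropy --- is known to be stable under separable extensions, so the direct part of $\sigma_K$ is nontrivial and hence $\sigma_K$ is not metabolic; finally, since $(A,\sigma)_K$ is totally decomposable, the dichotomy \cite[(6.2)]{dolphin3} (anisotropic or metabolic) forces $\sigma_K$ to be anisotropic. Note the order of operations: the dichotomy is applied only over $K$, where total decomposability actually holds, and the only property carried from $F$ to $K$ is one for which a separable-descent theorem exists. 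If you want to repair your argument along its current lines, replace ``anisotropic'' by ``has nontrivial direct part'' as the property you propagate, which is exactly what the paper does.
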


\begin{proof}
The `only if' implication is evident.
Conversely, suppose that $(A,\sigma)_K$ is totally decomposable.
Since $\sigma$ is anisotropic, \cite[(7.7)]{dolphin2} implies that $\sigma$ has nontrivial direct part.
By \cite[(9.3)]{dolphin2} the direct part of $\sigma_K$ is also nontrivial.
In particular, $\sigma_K$ is not metabolic.
It follows from \cite[(6.2)]{dolphin3} that $\sigma_K$ is anisotropic.
By \cref{td} we have
\[S(A_K,\sigma_K)\subseteq\sym(A_K,\sigma_K)\quad {\rm and} \quad \dim_KS(A_K,\sigma_K)=2^n.\]
We also have $S(A_K,\sigma_K)=S(A,\sigma)\otimes K$ by \cref{sep}, hence $\dim_FS(A,\sigma)=2^n$.
Finally, the equality $\sym(A_K,\sigma_K)=\sym(A,\sigma)\otimes K$ implies that $S(A,\sigma)\subseteq\sym(A,\sigma)$.
Hence, $(A,\sigma)$ is totally decomposable by \cref{td}.
\end{proof}

\begin{rem}\label{rem}
\cref{septd} is not true for isotropic involutions.
Indeed, as observed in \cite[(9.4)]{dolphin3}, there exist a field $F$ and an $F$-algebra with isotropic orthogonal involution $(A,\sigma)$ which is not totally decomposable, but $(A,\sigma)_K$ is totally decomposable for any splitting field $K$ of $A$.
Hence, in this case, \cref{septd} is false for every separable splitting field $K$ of $A$.
\end{rem}

The following result gives an alternative proof of \cite[(3.9)]{mep}.
\begin{thm}\label{mainn}
For an algebra with anisotropic orthogonal involution $(A,\sigma)$ of degree $2^n$ over a field $F$ the following statements are equivalent:
\begin{itemize}
  \item [(1)] $(A,\sigma)$ is totally decomposable.
  \item [(2)] For every field extension $K/F$, $(A,\sigma)_K$ is either anisotropic or metabolic.
  \item [(3)] For every splitting field $K$ of $A$, $(A,\sigma)_K$ is adjoint to a Pfister form.
  \item [(4)] $(A,\sigma)_K$ is totally decomposable for some separable extension $K/F$.
\end{itemize}
\end{thm}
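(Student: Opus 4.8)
The plan is to obtain the equivalence by splicing \cref{septd} onto the three equivalences $(1)\Leftrightarrow(2)\Leftrightarrow(3)$ that are already recorded in the literature. Concretely, the implications $(1)\Rightarrow(2)$ and $(2)\Rightarrow(3)$ are proved in \cite{dolphin3}, while $(3)\Rightarrow(1)$ is the main theorem of \cite{mep}; chaining these around gives $(1)\Leftrightarrow(2)\Leftrightarrow(3)$ with no further work. It therefore suffices to fold condition $(4)$ into this list, and the most economical route is to prove $(1)\Leftrightarrow(4)$ directly.

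For $(1)\Rightarrow(4)$ I would simply note that $F$ is a separable extension of itself, so that if $(A,\sigma)$ is totally decomposable one may take $K=F$, whereupon $(A,\sigma)_K=(A,\sigma)$ exhibits the required separable extension. The remaining implication $(4)\Rightarrow(1)$ is exactly the content of \cref{septd}: its hypotheses---that $\sigma$ is an anisotropic orthogonal involution of degree $2^n$ and that $K/F$ is separable---coincide with those standing in the present statement, and its conclusion is precisely that total decomposability over such a $K$ descends to $F$. Assembling these implications shows that $(1)$, $(2)$, $(3)$ and $(4)$ are mutually equivalent.

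There is no serious obstacle remaining at this stage, since all the difficulty has been absorbed into \cref{septd}, which in turn rests on \cref{sep} and hence on the compatibility of the alternator form $q_\sigma$ with separable extensions. The only point demanding attention is bookkeeping: each cited implication is stated under the anisotropy hypothesis that is standing throughout the present statement, and \cref{septd} must be invoked under exactly its stated hypotheses, so that the entire argument stays within the anisotropic orthogonal setting and never appeals to a variant requiring, for instance, that the involution remain anisotropic over a splitting field.
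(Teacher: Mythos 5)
Your proposal is logically correct: every implication you invoke is true and correctly attributed, and chaining them does establish the four-way equivalence. However, it takes a genuinely different route from the paper, and the difference matters for the purpose this theorem serves. The paper proves the cycle $(1)\Rightarrow(2)\Rightarrow(3)\Rightarrow(4)\Rightarrow(1)$: the first two implications are cited from \cite{dolphin3} exactly as you do, but $(3)\Rightarrow(4)$ is obtained by taking $K$ to be a \emph{separable splitting field} of $A$ (every central simple algebra admits one, and the adjoint involution of a Pfister form is totally decomposable), and then $(4)\Rightarrow(1)$ is \cref{septd}. This closes the loop without ever citing \cite{mep}. Your route instead imports $(3)\Rightarrow(1)$ from \cite{mep} and then attaches $(4)$ via the trivial extension $K=F$ and \cref{septd}. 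The cost of your shortcut is that the theorem is explicitly presented in the paper as ``an alternative proof of \cite[(3.9)]{mep}'': the whole point is that the implication $(3)\Rightarrow(1)$ is \emph{recovered} here, as the composite $(3)\Rightarrow(4)\Rightarrow(1)$, rather than assumed. By citing \cite{mep} as an ingredient you make that claim circular---one cannot give an alternative proof of a result by invoking it. What your version buys is slight economy (no need for the existence of separable splitting fields); what the paper's version buys is independence from \cite{mep}, which is the theorem's raison d'\^etre. If you replace your $(1)\Leftrightarrow(2)\Leftrightarrow(3)$ step by the paper's $(3)\Rightarrow(4)$ argument, your write-up becomes both correct and faithful to the intended role of the result.
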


\begin{proof}
The implications $(1)\Rightarrow(2)$ and $(2)\Rightarrow(3)$ were proved in \cite[(6.2)]{dolphin3} and \cite[(8.3)]{dolphin3} respectively.
The implication $(3)\Rightarrow(4)$ follows by taking $K$ to be a separable splitting field of $A$ and $(4)\Rightarrow(1)$ follows from \cref{septd}.
\end{proof}
\footnotesize

\end{document}